\documentclass[12pt,reqno]{amsart}

\usepackage[utf8]{inputenc}
\usepackage[T1]{fontenc}

\usepackage{amsmath, amssymb, amsthm}
\usepackage{geometry}
\usepackage{setspace}
\usepackage{enumitem}
\usepackage{hyperref}
\hypersetup{
  colorlinks,
  citecolor=blue,
  linkcolor=blue,
  urlcolor=green}
\usepackage{marginnote}
\usepackage{xcolor}
\usepackage{upgreek}
\usepackage{tikz}
\usetikzlibrary{calc}
\usepgflibrary {shadings} 
\usetikzlibrary{intersections}
\usetikzlibrary{patterns}
\usepackage{mathtools}
\usepackage{verbatim}
\usepackage{lipsum}  
\usepackage{mathrsfs}
\usepackage{stmaryrd}
\usepackage[
    colorinlistoftodos,
    backgroundcolor=green!20!white,
    bordercolor=green!75!black,
    linecolor=green!75!black
]{todonotes}
\usepackage{cleveref}
\usepackage{bm}

\numberwithin{equation}{section}

\theoremstyle{plain}
\newtheorem{theorem}{Theorem}
\newtheorem{lemma}{Lemma}[section]

\newtheorem{corollary}{Corollary}

\theoremstyle{definition}

\newcommand{\C}{\mathbf{C}}
\newcommand{\R}{\mathbf{R}}
\newcommand{\Q}{\mathbf{Q}}

\newcommand{\Sp}{\mathbf{S}}

\renewcommand{\div}{\operatorname{div}}

\newcommand{\G}{G}

\renewcommand{\b}[1]{\bm{#1}}

\newcommand{\eps}{\varepsilon}
\begin{document}

\title{Stable solutions in the abelian Higgs model}
\author{Marco Badran}
\address{ETH Z\"urich, Department of Mathematics, Rämistrasse 101, 8092 Zürich, Switzerland.}
 	\email{marco.badran@math.ethz.ch}
 \address{Bocconi University, Via Roentgen 1, 20136 Milan, Italy}
 	\email{marco.badran@unibocconi.it}

\begin{abstract}
	We classify entire stable solutions with quadratic energy growth in the $4$-dimensional abelian Higgs model. 
	More precisely, we prove that such solutions are holomorphic with respect to some complex structure, namely they satisfy the first order equations introduced by Bradlow, and consequently their nodal sets are holomorphic curves. Conversely, we construct solutions whose nodal set is any prescribed holomorphic curve with quadratic area growth. As a corollary, we show that all stable solutions with linear energy growth in $\R^3$ are two-dimensional Bogomolnyi vortices.
\end{abstract}

\maketitle


\section{Introduction}

The abelian Higgs model is a geometric variational model arising from superconductivity and relativistic gauge field theory.
Given a Riemannian manifold $(M,g)$ and a complex line bundle $L\to M$ endowed with a Hermitian structure $\langle\cdot,\cdot\rangle$, we define the self-dual Yang--Mills--Higgs energy as 
\begin{equation}\label{eq: energy}
	E(u,\nabla)\coloneqq \frac12\int e(u,\nabla),\qquad e(u,\nabla)\coloneqq |\nabla u|^2+|F_\nabla|^2+\frac14(1-|u|^2)^2.
\end{equation}
where $\nabla$ is a metric connection on $L$, $u\in \Gamma(L)$ is a section and $F_\nabla\coloneqq i\nabla^2$ denotes the (real) curvature of the connection. Critical points solve 
\begin{equation*}
	\nabla^*\nabla u=\frac12(1-|u|^2)u,\quad d^*F_\nabla =\langle \nabla u,iu\rangle.
\end{equation*}
In recent years, the abelian Higgs model has received considerable attention because of its intimate relationship to the codimension two volume functional \cite{Pigati-Stern2021,Parise-Pigati-Stern2024a,Parise-Pigati-Stern2024b,Badran-delPino2023,Badran-delPino2024,DePhilippis-Pigati2024,DePhilippis-Halavati-Pigati2024,Nguyen-Wang2026}. Critical points of the rescaled energies
\begin{equation*}
	E_\eps(u,\nabla)\coloneqq \frac12\int |\nabla u|^2+\eps^2|F_\nabla|^2+\frac1{4\eps^2}(1-|u|^2)^2
\end{equation*}
act as a diffuse approximation of (generalised) minimal surfaces in codimension two \cite{Pigati-Stern2021,Parise-Pigati-Stern2024a,Parise-Pigati-Stern2024b}. This perspective is appealing because a variational existence and regularity theory for the volume functional is notoriously challenging, while diffuse models are generally more flexible. Of particular success is the codimension-one setting, where the diffuse Allen--Cahn model was used to recover classical results, such as the existence of embedded minimal hypersurfaces in any closed $n+1$-manifold for $3\leq n+1\leq 7$ \cite{Guaraco2018} or \emph{Yau's conjecture} on the existence of infinitely many embedded minimal surfaces in $3$-manifolds for generic metrics \cite{Chodosh-Mantoulidis2020}. The regularity of the limit interfaces follows two possible routes: either using a powerful structural theorem for limiting varifolds as in \cite{Guaraco2018}, building on the deep regularity theory of Tonegawa and Wickramasekera \cite{Tonegawa-Wickramasekera2012,Wickramasekera2014}, or proving directly uniform estimates for the level sets of solutions, as in \cite{Chodosh-Mantoulidis2020}, building on the theory of \cite{Wang-Wei2019a,Wang-Wei2019b}. A crucial ingredient in the latter strategy is the so-called \emph{stable De Giorgi conjecture (with energy bounds)}, that is the classification of entire stable solutions of the Allen--Cahn equation with Euclidean energy growth, which are known to be one-dimensional (i.e.~ with flat level sets) in ambient dimension $n+1=2,3$ \cite{Ghoussoub-Gui1998,Ambrosio-Cabre2000} and more recently in $n+1=4$ \cite{Florit-Serra2025}.

\medskip

In codimension two no structural regularity theorem for Yang--Mills--Higgs limits comparable to \cite{Tonegawa-Wickramasekera2012,Wickramasekera2014} is known, motivating an alternative approach based on the classification of stable solutions. Unfortunately, there is no hope for solutions to have flat level sets by the $U(1)$-gauge invariance of the energy: for any $\Sp^1$-valued map $\zeta$ 
\begin{equation*}
	E_\eps(u,\nabla)=E_\eps(\zeta u,\nabla-i\zeta^*(d\theta))
\end{equation*}
which makes the phase of a solution completely arbitrary. Even restricting our attention to the nodal set $\{u=0\}$ or fixing a convenient gauge, non-flat stable counterexamples exist \cite{Liu-Wei-Ye2024,Liu-Ma-Wei-Wu2025}.
This is to be expected, since there are many examples of non-flat stable minimal surfaces in codimension two: all holomorphic curves, which are in fact calibrated and hence area minimising. In the beautiful work \cite{Micallef1984}, Micallef proved that these examples are the only stable ones, in the sense that any complete, connected, parabolic, orientable, stable,  minimal immersion $M^2\to \R^4$ is holomorphic with respect to some complex structure. Under the Euclidean volume growth assumption, such surfaces are in fact algebraic \cite{Edelen-Reyna-Minter2026}.

\medskip

There is a natural notion of holomorphicity for Yang--Mills--Higgs solutions in four dimensions, introduced by Bradlow \cite{Bradlow1990} in the context of K\"ahler surfaces. Indeed on a K\"ahler surface $(M^2,g,J,\omega)$ the energy  density in \eqref{eq: energy} decomposes as 
\begin{equation}\label{eq: bradlow decomp}
	\begin{split}
		e(u,\nabla)d\mathrm{vol}_g&=\left(2|(\nabla u)^{0,1}_J|^2+4|F_{\nabla;J}^{0,2}|^2+\left(\Lambda_JF_\nabla-h(u)\right)^2\right)d\mathrm{vol}_g\\
		&+F_\nabla\wedge \omega-F_\nabla\wedge F_\nabla+(\div V_J)d\mathrm{vol}_g
	\end{split}
\end{equation}
where $d\mathrm{vol}_g=\frac12\omega\wedge\omega$ is the volume form and
\begin{equation*}
	h(u)\coloneqq \frac12(1-|u|^2),\qquad V_J\coloneqq \sum_{k=1}^2[\langle \nabla_{Je_k}u,iu\rangle e_k-\langle \nabla_{e_k}u,iu\rangle Je_k].
\end{equation*}
When integrated, the terms in the second line of \eqref{eq: bradlow decomp} are constant under compactly supported variations, hence solutions of the first order Bradlow system 
\begin{equation}\label{eq: Bradlow eq}
	(\nabla u)^{0,1}_J=0,\quad F_{\nabla;J}^{0,2}=0,\quad \Lambda F_\nabla=h(u).
\end{equation}
minimise $E(u,\nabla)$ under compactly supported perturbations.
Here $\Lambda$ denotes the contraction with the K\"ahler form; see \Cref{sec: bradlow} for a precise definition of the notation used here. 
We call a solution $(u,\nabla)$ to \eqref{eq: Bradlow eq} a Bradlow vortex with respect to the K\"ahler structure $(M,g,J,\omega)$. Such first order equations have to be regarded as direct four-dimensional analogues of the two-dimensional Bogomolnyi equations for planar vortices \cite{Bogomolnyi1976}
\begin{equation}\label{eq: bogo}
	\nabla_{e_1}u+i\nabla_{e_2}u=0,\quad *F_\nabla=h(u).
\end{equation}
The second equation in \eqref{eq: Bradlow eq} implies that $\left(\nabla ^{0,1}_J\right)^2=-iF^{0,2}_{\nabla;J}=0$ and hence $\nabla ^{0,1}_J$ defines a holomorphic structure, for which $u$ is a holomorphic section thanks to the first equation. Consequently, the nodal set is a holomorphic divisor, whose regular part is calibrated by $\omega$, unless $u\equiv 0$.

\medskip

Our main result states that any stable solution in $\R^4$ with quadratic energy growth is a Bradlow vortex.

\begin{theorem}\label{thm: classification}
	Let $(u,\nabla)$ be a stable solution to the self-dual Yang--Mills--Higgs equations in $\R^4$ satisfying 
	\begin{equation}\label{eq: energy bound}
		\int_{B_R}e(u,\nabla)\leq CR^2
	\end{equation}
	for some $C>0$ and all $R\geq 1$. Then, there exists a constant orthogonal complex structure $J$ in $\R^4$ for which $(u,\nabla)$ solves \eqref{eq: Bradlow eq}.
\end{theorem}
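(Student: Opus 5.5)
Following Micallef's strategy for stable minimal surfaces in $\R^4$, the plan is to build, for each constant orthogonal complex structure $J$, a test variation from the "anti-holomorphic part" of the solution, and then use stability to force that part to vanish for a suitable $J$.

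\textbf{Step 1: variations from the Bradlow defects.} Fix a constant orthogonal complex structure $J$ on $\R^4$. Define the Bradlow defect
\[
\Phi_J=\bigl((\nabla u)^{0,1}_J,\ F^{0,2}_{\nabla;J},\ \Lambda_JF_\nabla-h(u)\bigr).
\]
The translation Killing fields give Jacobi fields $\xi_X=(\nabla_X u,\ \iota_XF_\nabla)$ for $X\in\R^4$. The combinations $\xi_X+\mathcal J\xi_{JX}$ play the role of Micallef's $\bar\partial$-part, where $\mathcal J$ is the natural complex structure on variations, acting by $i$ on $u$ and by the Hodge-type operation on one-forms. These combinations are exactly the components of $\Phi_J$. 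The goal is to show that $\Phi_J$ satisfies a first-order \emph{linear} equation of the form $\bar D_J\Phi_J=0$ coupled to the solution. Equivalently, $\Phi_J$ solves a "holomorphic" Dirac-type system whose square is the Jacobi operator restricted to $\mathcal J$-anti-invariant directions. This is the gauge-theoretic analogue of the fact that the $(1,0)$-part of the normal projection of a constant vector is holomorphic.

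\textbf{Step 2: the stability inequality.} Plug the complexified variations $\eta\,\Phi_J$ into the second variation, with $\eta$ a cutoff. Combine the Bochner/Weitzenböck identity from Step 1 with the decomposition \eqref{eq: bradlow decomp}. In Micallef's argument the Jacobi operator becomes $\bar\partial^*\bar\partial$ on the relevant bundle, so stability yields
\[
\int \eta^2|\bar D_J\Phi|^2\le \int |d\eta|^2|\Phi|^2 .
\]
Here the nontrivial input is that stability must be averaged over the family of complex structures. Complex structures compatible with the orientation form an $\Sp^2$, and the analogue of Micallef's key lemma is that $\int_{\Sp^2}Q(\eta\Phi_J)\,dJ$ equals a sum of nonnegative terms plus a cutoff error. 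The sign of the zeroth-order potential term works out precisely because the self-dual potential $\frac14(1-|u|^2)^2$ yields a Jacobi operator that factors.

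\textbf{Step 3: parabolicity and the conclusion.} Use the logarithmic cutoff trick. With $\eta$ equal to $1$ on $B_R$, decaying like $\log(R^2/|x|)/\log R$ up to $B_{R^2}$, the quadratic growth \eqref{eq: energy bound} together with $|\Phi_J|^2\lesssim e(u,\nabla)$ gives an error of order $\int|d\eta|^2e\lesssim 1/\log R\to0$. This is precisely where the $R^2$ growth is sharp, matching parabolicity in Micallef's theorem. The averaged inequality then shows that, for each $J$, a suitable holomorphic quantity built from $\Phi_J$ vanishes identically.

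From this, argue as in Micallef: the pointwise Gauss-type map of the solution is pushed to be constant. The relevant map is $x\mapsto$ the complex structure for which the $(0,1)$-part of the stress–energy tensor vanishes, coming from the traceless part of $\nabla u\otimes\nabla u+F\circ F$. With this, one selects a single $J$ (or $-J$, or the opposite orientation) with $\Phi_J\equiv0$. Then $(u,\nabla)$ solves \eqref{eq: Bradlow eq} for that $J$.

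\textbf{Main obstacle.} I expect the hardest part to be Step 2. One must find the correct complexified test variations and verify, by a long Weitzenböck-type computation with gauge fixing and the curvature terms $F\wedge F$, that the averaged second variation is nonnegative modulo the cutoff term. I would first try it on the $\mathcal J$-anti-invariant part $\xi_X-\mathcal J\xi_{JX}$ and check the identity against the known Bogomolnyi case in $\R^2\times\R^2$.
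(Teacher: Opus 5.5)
There is a genuine gap. Everything is deferred to the averaged ``key lemma'' of Step 2, which is never formulated, and the concrete claims around it do not hold.

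\emph{Step 1.} The combinations $\xi_X+\mathcal J\xi_{JX}$ of translation Jacobi fields are linear in $(\nabla^Au,F_A)$. For $\mathcal J(\phi,\Phi)=(i\phi,\pm\Phi\circ J)$ they give $\bigl(2\overline\partial^J_Au(X),\,\iota_XF_A\pm(\iota_{JX}F_A)\circ J\bigr)$, so they can never produce the zeroth-order component $\Lambda_JF_A-h(u)$. Moreover $\Phi_J$ lives in $\Omega^{0,1}(L)\oplus\Omega^{0,2}\oplus\Omega^0$, not in the space of variations, so $Q(\eta\Phi_J)$ has no meaning as written.

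What is true is the Bogomolnyi form of the second variation, $Q(\xi)=\|D_J\xi\|^2+\langle\Phi_J,N(\xi,\xi)\rangle$ up to constants, where $D_J$ is the linearised Bradlow operator and $N$ is quadratic; also $D_J^*\Phi_J=0$ at critical points. But the second term has no sign unless $\Phi_J=0$, so ``the Jacobi operator factors'' presupposes the conclusion. Nor does $\mathcal J$ commute with the Jacobi operator: the cross term $4\langle i\Phi\cdot\nabla^Au,\phi\rangle$ in $\mathbf Q$ is $\mathcal J$-invariant only if $u$ is already $J$-holomorphic or antiholomorphic.

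\emph{Steps 2--3.} Stability is invariant under orientation reversal. Bradlow vortices for an anti-self-dual structure with non-planar nodal curve are stable, yet are not Bradlow for any self-dual structure. So any valid identity must detect something like $\min\{g_+,g_-\}$. Your Step 3 gives no mechanism for this dichotomy, nor for constancy of the complex structure, and no holomorphic identity theorem is available a priori as in Micallef. There is also a sign slip: to extract anything from stability you need $Q(\eta\,\cdot)=\text{cutoff}-\text{nonnegative}$, not ``plus''.

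\emph{The missing idea.} Avoid choosing $J$ at all, since it is unknown and a priori point-dependent. The paper tests with $\mathbf W_\alpha=(i\nabla^A_\alpha u,-B_\alpha)$, where $B=|iF|=\sqrt{-F^2}$; a posteriori these are the translations along $J e_\alpha$. Comparing with $\mathbf V_\alpha$ gives
\[
\sum_{\alpha=1}^4\mathbf Q[\eta\mathbf W_\alpha]=\int|d\eta|^2\bigl(|\nabla^Au|^2+2|F_A|^2\bigr)-\int\eta^2\bigl(\|\nabla F\|^2-\|\nabla B\|^2\bigr)-4\int\eta^2(D^Au)^\dagger\bigl(|iF|+iF\bigr)D^Au.
\]
Both subtracted terms are nonnegative, by $\|\nabla B\|\le\|\nabla F\|$ and $|iF|+iF\ge0$. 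The paper then proceeds as follows.
\begin{enumerate}
\item Stability and a logarithmic cutoff give $Q[\eta_R\mathbf W_\alpha]\to0$ and $G[\mathbf W_\alpha]=0$.
\item The Cauchy--Schwarz inequality for the stable bilinear form upgrades this to: $\mathbf W_\alpha$ is a weak Jacobi field, i.e.\ $(-\Delta+|u|^2)B_{\alpha\beta}=2\langle\nabla^A_\alpha u,\nabla^A_\beta u\rangle$.
\item Tracing against the Bochner identity for $h$ gives $(-\Delta+|u|^2)w=0$ for $w=h-\frac12\operatorname{tr}B=\min\{g_+,g_-\}$. A second log-cutoff gives $w\equiv0$, which is exactly the orientation dichotomy.
\item Then $g_\pm\ge0$, with one of them vanishing somewhere. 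The strong maximum principle applied to $(-\Delta+|u|^2)g_+=2|\overline\partial^{J_+}_Au|^2+\frac12\rho_+|\nabla\omega_+|^2$ gives $g_+\equiv0$, hence $\nabla\omega_+=0$ and the Bradlow equations.
\end{enumerate}
None of these steps has a counterpart in your outline.
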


By a holomorphic curve in $\C^2$ we mean a closed complex analytic subset of pure complex dimension one.
Our second result serves to complete the picture, by proving that any holomorphic curve with quadratic area growth is the nodal set of a solution to \eqref{eq: Bradlow eq}.

\begin{theorem}\label{thm: existence}
	Let $\Sigma$ be a holomorphic curve in $\R^4$ with respect to some constant orthogonal complex structure $J$ and assume that $\Sigma$ has quadratic area growth. Then, there exists a solution $(u,\nabla)$ to \eqref{eq: Bradlow eq} with quadratic energy growth and satisfying $\{u=0\}=\Sigma$, counting multiplicity.
\end{theorem}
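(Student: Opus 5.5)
The plan is to reduce the Bradlow system to a scalar Kazdan--Warner type equation on $\C^2$. Holomorphicity is encoded in a holomorphic defining function, and the remaining equation $\Lambda F_\nabla=h(u)$ is solved variationally. After a rotation we may take $J$ to be the standard complex structure on $\C^2$.

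\textbf{Step 1: a defining function of controlled growth.} Since $\Sigma$ is a holomorphic curve with quadratic area growth, Stoll/Bishop type results show that $\Sigma$ is algebraic. It is then the zero divisor, with multiplicities, of a polynomial $f:\C^2\to\C$. Without algebraicity we would instead solve $\frac{i}{\pi}\partial\bar\partial \log|f|^2=[\Sigma]$ with $f$ entire. The key requirement is a defining function with good growth; concretely I aim for $\log|f|$ to be of logarithmic growth away from $\Sigma$.

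\textbf{Step 2: the ansatz.} Work on the trivial bundle with the Chern connection of the Hermitian metric $e^{-\phi}$. Set $u=e^{-\phi/2}f$ in a unitary gauge. The connection is $\nabla=d+\frac{i}{2}d^c\phi$, up to conventions, so that $\nabla^{0,1}=\bar\partial+\bar\partial\phi$-type. Then $(\nabla u)^{0,1}=0$ because $f$ is holomorphic, and $F^{0,2}=0$ automatically since $F$ is $\partial\bar\partial$-exact. The only remaining equation $\Lambda F_\nabla=h(u)$ becomes
\[
\tfrac12\Delta\phi=\tfrac12\bigl(1-e^{-\phi}|f|^2\bigr)
\]
up to normalising constants. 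Writing $\phi=\log(1+|f|^2)+w$, or simply $\phi=\log|f|^2+v$ away from $\Sigma$, this becomes $-\Delta v = e^{-v}-1$ with a distributional source on $\Sigma$. This is the four-dimensional analogue of Taubes' equation.

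\textbf{Step 3: existence and energy growth, the main obstacle.} On each ball $B_R$ I solve the Dirichlet problem by sub/supersolutions. The supersolution is $\log|f|^2$ smoothed, namely $\psi_0=\log(1+|f|^2)$; one checks that $\Delta\psi_0\ge0$ and compares. The subsolution comes from the $\Sigma$-singular part. This gives solutions with $0<|u|<1$ off $\Sigma$, by the maximum principle. These are monotone in $R$, so we can pass to the limit $R\to\infty$ using local elliptic estimates.

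The hard part is proving the quadratic energy bound $\int_{B_R}e\le CR^2$. For this I would use the Bradlow decomposition (eq: bradlow decomp). For solutions, the energy density equals $F\wedge\omega-F\wedge F+\operatorname{div}V_J$. On $B_R$, the term $\int F\wedge\omega$ is controlled by $\int_{B_R}\Lambda F=\int_{B_R}h(u)$. I expect $h(u)$ to decay exponentially away from $\Sigma$, by comparison: $1-|u|^2\lesssim e^{-c\,\mathrm{dist}(x,\Sigma)}$ via the equation $-\Delta v=e^{-v}-1$ with a barrier. Then $\int_{B_R}h(u)\lesssim |\Sigma\cap B_{R+1}|\lesssim R^2$. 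The term $F\wedge F$ is exact, and $V_J$ is bounded. Their boundary contributions over $\partial B_R$ are handled by the same exponential decay, because $\nabla u$ and $F$ concentrate in a tubular neighbourhood of $\Sigma$; this gives area $\lesssim R\cdot R^{2}/R$ after averaging in $R$. The uniform exponential decay near $\Sigma$, including near singular points and at infinity, is where I expect the real work. I would obtain it by using a barrier $\log|f|^2-\log\bigl(\text{local model}\bigr)$ together with the algebraicity of $\Sigma$ at infinity, since $\Sigma$ is asymptotic to a union of planes. Finally, $\{u=0\}=\{f=0\}=\Sigma$ with multiplicity, because $e^{-\phi/2}$ is nonvanishing.
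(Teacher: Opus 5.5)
Your Steps 1--2 match the paper's reduction: a polynomial $f$ cutting out $\Sigma$, the ansatz $u=e^{-\phi/2}f$, $A=\frac12 d^c\phi$, and the scalar equation $\Delta\phi=1-|f|^2e^{-\phi}$. The gaps are in Step 3, in both the existence argument and the energy bound.

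\textbf{The supersolution.} $\psi_0=\log(1+|f|^2)$ is not a supersolution, and $\Delta\psi_0\ge 0$ is not the relevant check. You need $\Delta\psi_0\le 1-|f|^2e^{-\psi_0}=(1+|f|^2)^{-1}$. Since $\Delta\log(1+|f|^2)=4|\partial f|^2(1+|f|^2)^{-2}$, this is the condition $4|\partial f|^2\le 1+|f|^2$, which already fails at the origin for $f=z_1$. Replacing $1$ by a large constant $c$ does not give a global supersolution in general. For $f=z_1z_2$, along $\{z_2=0\}$ one has $f=0$ while $|\partial f|^2=|z_1|^2$ is unbounded. Without a global supersolution you lose the monotonicity in $R$: comparing $\phi_{R'}$ with $\phi_R$ on $B_R$ requires $\phi_{R'}\le\psi_0$ on $\partial B_R$. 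You also lose the uniform upper bound needed to pass to the limit.

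The missing idea is to build the derivatives of $f$ into the barrier. Take $m=\deg f$ and $S=\sum_{|\alpha|\le m}\frac{(4m)^{|\alpha|}}{\alpha!}|\partial^\alpha f|^2>0$. The identity $\Delta|g|^2=4\sum_j|\partial_{z_j}g|^2$ for holomorphic $g$ gives $\Delta S\le S-|f|^2$ and $|\nabla S|^2\le S\Delta S$, hence $0\le\Delta\log S\le 1-|f|^2/S$. Now apply the maximum principle to $v=\log|f|^2-\phi_R$, which solves $\Delta v=e^v-1$ off $\Sigma$, is $\le 0$ on $\partial B_R$, and tends to $-\infty$ at $\Sigma$. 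Together these give $\log|f|^2\le\phi_R\le\log S$ uniformly in $R$. This two-sided bound is what makes the limit work, and it drives everything afterwards.

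\textbf{The energy bound.} Your argument rests on a uniform decay $1-|u|^2\lesssim e^{-c\,\mathrm{dist}(x,\Sigma)}$ on all of $\R^4$. It also needs pointwise bounds for $\nabla^Au$ and for $A$ on $\partial B_R$, since the boundary term $\int_{\partial B_R}A\wedge F_A$ coming from $F_A\wedge F_A$ is gauge dependent. None of these is proved, and the barrier you suggest is not specified. In particular, a lower bound for $|u|$ at unit distance from $\Sigma$, uniform near singular points and along the ends, is a substantive statement you would have to establish.

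It is also unnecessary. Rescale $p_R=R^{-m}f(R\,\cdot)$; after a linear change of coordinates $p_R$ is monic in $z_1$ with coefficients bounded uniformly in $R$, so $\log|p_R|$ is bounded in $L^2(B_2)$. With the two-sided bound this gives $\|\phi-2m\log R\|_{L^2(B_{2R})}\le CR^2$. Take a cutoff $\chi_R$ with $|\Delta\chi_R|\le CR^{-2}$. Then
\[
\int_{B_R}(1-|u|^2)\le\int\chi_R\Delta\phi=\int(\phi-2m\log R)\Delta\chi_R\le CR^2 .
\]
Interior $L^2$ estimates for $D^2\phi$ then bound $\int_{B_R}|F_A|^2$. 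The Bochner identity $2|\nabla^Au|^2=\Delta|u|^2+(1-|u|^2)|u|^2$, tested against $\chi_R$, bounds $\int_{B_R}|\nabla^Au|^2$. No decay estimate and no use of the Bradlow decomposition is needed.
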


The proof of \Cref{thm: existence} is based on a reduction to a single nonlinear scalar equation depending on the holomorphic function defining the divisor, which is classical \cite{Taubes1980,Bradlow1990}. We prove existence of a solution to this equation using barriers and a compactness argument.

As a corollary to \Cref{thm: classification}, we establish the analogous classification result in $\R^3$.

\begin{corollary}\label{cor: class R3}
	Let $(u,\nabla)$ be a stable solution to the self-dual Yang--Mills--Higgs equations in $\R^3$ satisfying 
	\begin{equation*}
		\int_{B_R}e(u,\nabla)\leq CR
	\end{equation*}
	for some $C>0$. Then $(u,\nabla)$ is a two-dimensional Bogomolnyi vortex, namely there exists an orthonormal frame $\{e_1,e_2,e_3\}$ such that $(u,\nabla)$ solves \eqref{eq: bogo} and
	\begin{equation*}
		\nabla_{e_3}u=0,\quad \iota_{e_3}F_\nabla=0.
	\end{equation*} 
\end{corollary}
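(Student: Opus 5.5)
The idea is to lift the three-dimensional solution to $\R^4=\R^3\times\R$ and apply \Cref{thm: classification}. Write $x=(y,t)$ with $y\in\R^3$ and $t\in\R$. Let $\pi:\R^4\to\R^3$ be the projection, and take $\tilde u=\pi^*u$ and $\tilde\nabla=\pi^*\nabla$ on the pulled-back bundle. Then $\tilde\nabla_{\partial_t}\tilde u=0$ and $\iota_{\partial_t}F_{\tilde\nabla}=0$, and $(\tilde u,\tilde\nabla)$ solves the self-dual Yang--Mills--Higgs equations with energy density $\tilde e(y,t)=e(u,\nabla)(y)$. Since $B^4_R\subset B^3_R\times[-R,R]$, the hypothesis gives
\begin{equation*}
\int_{B^4_R}\tilde e\leq 2R\int_{B^3_R}e\leq 2CR^2,
\end{equation*}
so \eqref{eq: energy bound} holds.

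\textbf{Stability of the lift.} This is the main point to check. Take a compactly supported variation $(\varphi,a)$ on $\R^4$ and Fourier transform (or decompose) in $t$. Because the coefficients of the second variation are $t$-independent, the quadratic form splits into a sum over frequencies $\xi$. Each summand is the three-dimensional second variation evaluated on $(\hat\varphi(\cdot,\xi),\hat a(\cdot,\xi))$, plus the nonnegative terms coming from $\partial_t$-derivatives.

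The $dt$-component of $a$ needs care. Write $a=b+f\,dt$. The extra terms in the Hessian are $|\partial_t\varphi - if\,u|^2$, the curvature contributions $|d_yf-\partial_tb|^2$, and $f^2|u|^2$-type mass terms. I expect these to be nonnegative once completed into squares, though a small cross-term cancellation must be verified.

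A cleaner route avoids that check. Average over $t$: the Hessian is a nonnegative combination of the three-dimensional Hessian applied to slices, plus squares involving $\partial_t$ and $f$. Alternatively, use the standard logarithmic cutoff in $t$ on products $\eta(t)(\varphi_0,a_0)$ and note that \Cref{thm: classification}'s proof presumably only uses such test fields.

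I would first try the direct computation
\begin{equation*}
Q_4(\varphi,b+f\,dt)=\int_\R Q_3\bigl(\varphi(\cdot,t),b(\cdot,t)\bigr)\,dt+\int\Bigl(|\partial_t\varphi-ifu|^2+|d_yf-\partial_t b|^2+\dots\Bigr).
\end{equation*}
Here the scalar field $f$ enters through the gauge-fixed form. Any gauge-direction terms vanish because the equations are gauge invariant, so I can project $f$ out by a gauge transformation $\varphi\mapsto\varphi+i\chi u$, $a\mapsto a+d\chi$. That transformation does not change $Q_4$ at a critical point, and choosing $\partial_t\chi=-f$ kills the $dt$-component.

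\textbf{Conclusion.} By \Cref{thm: classification} there is a constant orthogonal complex structure $J$ on $\R^4$ such that $(\tilde u,\tilde\nabla)$ solves \eqref{eq: Bradlow eq}. Set $e_3'=J\partial_t$, a unit vector orthogonal to $\partial_t$, hence tangent to $\R^3$.

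From $(\tilde\nabla\tilde u)^{0,1}=0$ we get $\tilde\nabla_{J\partial_t}\tilde u=J$-related to $\tilde\nabla_{\partial_t}\tilde u=0$. Concretely, $\tilde\nabla_{Jv}\tilde u=i\tilde\nabla_v\tilde u$, so $\nabla_{e_3'}u=0$.

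Since $F$ is real of type $(1,1)$, it satisfies $F(Jv,Jw)=F(v,w)$. Hence $\iota_{e_3'}F(w)=F(J\partial_t,w)=-F(\partial_t,Jw)=0$.

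Now choose $e_1,e_2$ spanning the $J$-invariant plane orthogonal to $\{\partial_t,J\partial_t\}$, with $e_2=Je_1$. The Cauchy--Riemann equation gives $\nabla_{e_1}u+i\nabla_{e_2}u=0$. Because $\iota_{\partial_t}F=\iota_{e_3'}F=0$, we have $\Lambda F=F(e_1,e_2)$, so $\Lambda F=h(u)$ becomes $*F=h(u)$ on the plane. Setting $e_3=e_3'$, this is exactly \eqref{eq: bogo}, which completes the proof.
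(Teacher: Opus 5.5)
Your proposal is correct and follows the paper's argument: lift to $\R^3\times\R$, get quadratic growth from $B^4_R\subset B^3_R\times[-R,R]$, apply \Cref{thm: classification}, and read off $\nabla_{e_3}u=0$, $\iota_{e_3}F=0$ and \eqref{eq: bogo} using $e_3=\pm J\partial_t$ and the $J$-invariant plane orthogonal to $\{\partial_t,J\partial_t\}$ (the paper takes $e_4=Je_3$ with $e_4$ the added direction). The paper only asserts that stability passes to the lift, and your direct computation proves it with no leftover terms: because the background is $t$-independent with $A_t=0$ (so $\nabla^A_tu=0$ and $\nabla^A_t\varphi=\partial_t\varphi$), one has exactly $Q_4[(\varphi,b+f\,dt)]=\int_\R Q_3[(\varphi,b)(\cdot,t)]\,dt+\int|\partial_t\varphi-ifu|^2+|d_yf-\partial_tb|^2$, so you should drop the two alternative routes, which are in fact weaker ($\chi$ with $\partial_t\chi=-f$ is generally not compactly supported, and stability on products $\eta(t)(\varphi_0,a_0)$ would not suffice because the proof of \Cref{thm: classification} needs $Q\geq 0$ on arbitrary pairs via the Cauchy--Schwarz inequality \eqref{eq: Cauchy-Schwarz}).
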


We remark that while in $\R^2$ all finite energy solutions are Bogomolnyi vortices \cite{Taubes1980,Taubes1980b}, hence minimising, this is not the case in other ambient manifolds. In closed manifolds this follows from the min-max construction in \cite{Pigati-Stern2021} (see \cite[Remark 2.1]{Cheng2021}). Even in $\R^4$ such equivalence fails: the author together with M.~ del Pino \cite{Badran-delPino2023} constructed critical points whose non-flat nodal set is (asymptotically) contained in a slice $\R^3\times\{0\}\subset\R^4$, and are therefore unstable by \Cref{thm: classification} and a compactness argument. We also note that Cheng \cite{Cheng2021} proved that in highly symmetrical surfaces, such as the round sphere $\Sp^2$ and the flat torus $\mathbf{T}^2$, nonzero stable solutions satisfy the Bogomolnyi equations.

\subsection{Outline of the proof} 

We begin by normalising the self-dual and anti-self-dual parts of
the curvature to obtain two candidate K\"ahler forms,
\begin{equation*}
	\omega_\pm=\sqrt{2}\frac{F_\nabla^\pm}{|F_\nabla^\pm|},
\end{equation*}
defined wherever the corresponding curvature component is nonzero.
These determine orthogonal almost complex structures inducing opposite
orientations, and for which the $(0,2)$ component of $F_\nabla$ automatically vanishes. We associate to them the scalar Bradlow defects
\begin{equation*}
	g_\pm=h-\sqrt{2}|F_\nabla^\pm|
\end{equation*}
Each of these defects solves a PDE, see e.g.~ \eqref{eq: eq for g+}, which shows that the vanishing of either gives constancy of the candidate form $\omega_\pm$ and the conclusion of the Theorem.

This choice between two geometric alternatives has precedents in other
stability classifications. In Micallef's theorem \cite{Micallef1984},
holomorphicity corresponds to constancy of one of the two components
of the Gauss map. Bourguignon--Lawson \cite{Bourguignon-Lawson1981}
establish a self-dual/anti-self-dual dichotomy for stable Yang--Mills
connections on $\Sp^4$ with structure group $SU(2)$,
$SU(3)$, or $U(2)$. Cheng's classification \cite{Cheng2021} similarly uses stability to select between the vortex and anti-vortex equations.

The key observation is that both defects can be controlled through
a single auxiliary quantity,
\begin{equation*}
	w=h-\frac12\operatorname{tr}B
=\min\{g_+,g_-\},
\end{equation*}
where $F$ is the skew-symmetric matrix representing $F_\nabla$ and $B\coloneqq |iF|=\sqrt{-F^2}$ is a symmetric nonnegrative bilinear form. Indeed, representing $F_\nabla=\lambda_1\theta^1\wedge \theta^2+\lambda_2\theta^3\wedge \theta^4$ in an oriented orthonormal coframe, we have
\begin{equation*}
	\frac12\operatorname{tr}|iF|=|\lambda_1|+|\lambda_2|=\max\{|\lambda_1+\lambda_2|,|\lambda_1-\lambda_2|\}=\sqrt2\max\{|F_\nabla^+|,|F_\nabla^-|\}.
\end{equation*}
We will consider two different families of test pairs for the stability inequality
\begin{equation}\label{eq: tests}
	{\bf V}_\alpha=(\nabla_{e_\alpha}u,\iota_{e_\alpha}F_\nabla),\quad {\bf W}_\alpha=(i\nabla_{e_\alpha}u,-B_\alpha),\quad \alpha=1,\dots,4,
\end{equation}
where $B_\alpha$ is the 1-form defined by $B_\alpha(X)\coloneqq |iF|(e_\alpha,X).$
Taking the difference of the quadratic form applied to the test fields and summing in $\alpha$ yields a remainder
\begin{equation*}
	(\|\nabla F\|^2-\|\nabla B\|^2)+4(\nabla^Au)^\dagger(B+iF)(\nabla^Au)
\end{equation*}
where both terms are nonnegative. Interestingly, the first term can be interpreted as a Yang--Mills--Higgs version of the Sternberg--Zumbrun quantity \cite{Sternberg-Zumbrun1998},
\begin{equation*}
	|D^2u|^2-|\nabla|\nabla u||^2
\end{equation*}
which can be used to prove flatness of stable Allen--Cahn level sets in dimensions 2 and 3 \cite{Farina-Sciunzi-Valdinoci2008}. The second term is nonnegative because $B+iF=|iF|+iF$ is positive semidefinite.

\medskip

From this test we deduce the weak equation
\begin{equation*}
	(-\Delta+|u|^2)w=0 \qquad\text{in }\R^4.
\end{equation*}
Since $w^2$ is bounded by a constant multiple of the energy density, the quadratic energy growth and a logarithmic cutoff argument imply $w\equiv0$. Consequently, both defects are nonnegative and their minimum vanishes everywhere.
Stability enters twice: first, we use nonnegativity of the second variation to force suitable test pairs to have vanishing quadratic form in the cutoff limit. Secondly, it supplies the Cauchy--Schwarz inequality for the associated bilinear form, which converts this vanishing into the weak equation for $w$.
Dimension four enters both through the self-dual decomposition of two-forms and through the quadratic growth of the codimension-two energy, which allows the logarithmic cutoff errors to vanish.

\smallskip

Lastly, the test pairs \({\bf W}_\alpha\) in \eqref{eq: tests} admit a natural geometric interpretation. Suppose, for definiteness, that \Cref{thm: classification} selects the branch corresponding to \(J_+\). Its conclusions imply
\begin{equation*}
	\nabla^A_{J_+e_\alpha}u=i\nabla^A_{e_\alpha}u, \quad \iota_{J_+e_\alpha}F_\nabla=-B_\alpha.
\end{equation*}
Thus, setting \(\widetilde e_\alpha:=J_+e_\alpha\), we obtain
\begin{equation*}
	{\bf W}_\alpha
=(\nabla^A_{\widetilde e_\alpha}u,
\iota_{\widetilde e_\alpha}F_\nabla).
\end{equation*}
In other words, the \({\bf W}_\alpha\) are precisely the translation test pairs \({\bf V}_\alpha\) associated with the rotated orthonormal frame \((J_+e_\alpha)\). The advantage of defining them as \({\bf W}_\alpha=(i\nabla^A_{e_\alpha}u,-B_\alpha)\) is that this definition does not require knowing the complex structure in advance.
\section{Setting}
\subsection{The abelian Higgs framework}
We work in $\R^n$ with the Euclidean metric, which we denote $g$. The topological triviality of $\R^n$ ensures that every complex line bundle is trivial and that any unitary connection can be written as $\nabla^A\coloneqq d-iA$ for some one-form $A\in \Gamma(T^*\R^n)$. Hence, we write the energy \eqref{eq: energy} equivalently as 
\begin{equation*}
	E(u,A;\Omega)\coloneqq \frac{1}{2}\int_\Omega e(u,A),\quad e(u,A)\coloneqq  |\nabla^Au|^2+|F_A|^2+h(u)^2.
\end{equation*}
where $u$ is a complex-valued function and $h(u)=\frac12(1-|u|^2)$. We use angled brackets to denote the inner product in $\C$, that is $\langle a,b\rangle\coloneqq \mathrm{Re}(a\overline b)$. We denote a typical pair complex-valued function--real-valued one form by a bold uppercase letter, with components given by the corresponding lowercase and uppercase letters; thus $\b{\Phi}=(\phi,\Phi)$. The only exception is reserved for the background solution, which will be denoted by ${\bf U}=(u,A)$. Note that the pair $(1,0)$ and all its gauge transformed are trivial solutions, called pure gauges.

The second variation of energy is represented by a quadratic form acting on compactly supported pairs 
\begin{equation*}
	Q_{\bf U}[\b\Phi]=\int |\nabla^A\phi-i\Phi u|^2-2\langle\nabla^Au,i\Phi \phi\rangle+ |d\Phi|^2+\langle u,\phi\rangle^2-h(u)|\phi|^2.
\end{equation*}
We say that ${\bf U}$ is stable if 
\begin{equation*}
	Q_{\bf U}[\b\Phi]\geq 0,\quad\text{for every compactly supported pair }\b\Phi.
\end{equation*}
The corresponding second order operator is not elliptic as a consequence of gauge invariance. For this reason, one usually considers a ``gauge corrected'' quadratic form
\begin{equation*}
	\Q_{\bf U}[\b\Phi]\coloneqq \int |\nabla^A\phi|^2+|d\Phi|^2+|d^*\Phi|^2+4\langle i\Phi\cdot \nabla^Au, \phi\rangle+ (|u|^2-h(u))|\phi|^2+|u|^2|\Phi|^2
\end{equation*}
which relates to $Q_{\bf U}[\b\Phi]$ by 
\begin{equation}\label{eq: relation Qs}
	\Q_{\bf U}[\b\Phi]=Q_{\bf U}[\b\Phi]+\int \G_{\bf U}[\b\Phi]^2,\qquad \G_{\bf U}[\b\Phi]\coloneqq d^*\Phi+\langle iu,\phi\rangle.
\end{equation}
The meaning of the operator $G_{\bf U}$ resides in the fact that a pair $\b\Phi$
 satisfying $\G_{\bf U}[\b\Phi]=0$ is $L^2$ orthogonal to the gauge-part of the kernel of the linearised operator. 
 
\smallskip

Consider the bilinear form obtained by polarisation of $Q_{\bf U}$,
\begin{equation*}
	B_{\bf U}[\b\Phi,\b\Psi]\coloneqq \frac{1}{4}\left(Q_{\bf U}[\b\Phi+\b\Psi]-Q_{\bf U}[\b\Phi-\b\Psi]\right).
\end{equation*}
A useful consequence of stability is the Cauchy--Schwarz inequality 
\begin{equation}\label{eq: Cauchy-Schwarz}
	|B_{\bf U}[\b\Phi,\b\Psi]|^2\leq Q_{\bf U}[\b\Phi]Q_{\bf U}[\b\Psi].
\end{equation}
Moreover, we record the simple consequence of \eqref{eq: relation Qs}. If $\b\Psi$ is a compactly supported pair and $G[\b\Phi]=0$, then 
\begin{equation}\label{eq: bilinear pair}
\begin{split}
	B_{\bf U}[\b\Phi,\b\Psi]&=\int \langle \nabla^A\phi,\nabla^A\psi\rangle +d\Phi\cdot d\Psi+d^*\Phi\cdot d^*\Psi\\
	&+\int (|u|^2-h(u))\langle \phi,\psi\rangle +|u|^2\Phi\cdot\Psi\\
	&+\int 2\langle i\Phi\cdot\nabla^Au,\psi\rangle +2\langle i\Psi\cdot\nabla^Au,\phi\rangle.
\end{split}
\end{equation}

\subsection{The Bradlow decomposition on K\"ahler manifolds}\label{sec: bradlow}
On the Euclidean space $(\R^4,g)$ consider an integrable constant orthogonal complex structure $J$ and note that, by constancy, the form $\omega_J(X,Y)\coloneqq g(JX,Y)$
is closed and thus defines a K\"ahler manifold $(\R^4,g,J,\omega_J)$. Consider the orientation naturally induced by the K\"ahler form, so that $\ast \omega_J=\omega_J$. The curvature of the connection $\nabla$ is identified with the real-valued two-form $F_\nabla=i\nabla^2$. Denote the contraction with the K\"ahler form $\Lambda_J\beta\coloneqq \langle \beta,\omega_J\rangle$, for every real two-form $\beta$. We use the complex structure $J$ to split the complexified cotangent bundle into $(0,1)$ and $(1,0)$ forms, and define $\overline \partial^J_Au\coloneqq (\nabla^Au)^{0,1}$. More explicitly,
\begin{equation*}
	\overline \partial^J_Au(X)=\frac12\left(\nabla_Xu+i\nabla_{JX}u\right).
\end{equation*}
Similarly, the space of complexified two-forms splits in its $(0,2)$, $(1,1)$ and $(2,0)$ components:
\begin{equation*}
	F_A=F_{A;J}^{1,1}+F_{A;J}^{0,2}+F_{A;J}^{2,0}.
\end{equation*}
Since $F_A$ is real, the condition $F_{A;J}^{2,0}=0$ is equivalent to $F_A$ being of type $(1,1)$, namely $F_A(JX,JY)=F_A(X,Y)$. For a real $(1,1)$ form, its self-dual part (with respect to the orientation induced by $J$) is 
\begin{equation*}
	F_A^+=\frac{1}{2}(\Lambda_J F_A)\omega_J.
\end{equation*}
Where it causes no confusion, we omit the complex structure $J$ from the notation.
\section{Proof of main result}
\subsection{Classification in $\R^4$}
The first step of the proof is to identify a quantity acting as ``Bradlow defect'', and show that it solves a certain PDE.
Set $j_Au\coloneqq \langle \nabla^Au,iu\rangle$ and $\psi(X,Y)\coloneqq \langle \nabla^A_Xu,i\nabla^A_Yu \rangle $. It is direct to check (see e.g.~ \cite[(2.6)]{Pigati-Stern2021}) that, denoting with $\Delta$ the negative spectrum Hodge laplacian 
\begin{equation*}
	(-\Delta+|u|^2) F_A=-2\psi.
\end{equation*}
Taking the self-dual part we get 
\begin{equation*}
	(-\Delta+|u|^2) F_A^+=-2\psi^+.
\end{equation*}
Now, $F_A^+$ is the candidate K\"ahler form but it is not constant, even in length. On the set $\Omega_+\coloneqq \{|F_A^+|\ne 0\}$, we set 
\begin{equation*}
	\rho_+\coloneqq \sqrt2|F_A^+|,\quad \omega_+\coloneqq \sqrt{2}\frac{F_A^+}{|F_A^+|}
\end{equation*}
so that $|\omega_+|^2\equiv 2$. Such $\omega_+$ defines an orthogonal almost complex structure $J_+$ on $\Omega_+$ obtained by $\omega_+(X,Y)=g(J_+X,Y).$ 
Now, we have 
\begin{equation*}
	\langle \Delta F_A^+,\omega_+\rangle=|u|^2\rho_++2\langle \psi^+,\omega_+\rangle.
\end{equation*}
If we differentiate $F_A^+=\frac12\rho_+\omega_+$ and use that $\langle \nabla_k\omega_+,\omega_+\rangle=0$ and $\langle \Delta\omega_+,\omega_+\rangle=-|\nabla\omega_+|^2$, we get 
\begin{equation*}
	\langle \Delta F_A^+,\omega_+\rangle=\Delta\rho_+-\frac12\rho_+|\nabla \omega_+|^2.
\end{equation*}
Hence 
\begin{equation*}
	(\Delta -|u|^2)\rho_+=2\langle \psi^+,\omega_+\rangle+\frac12\rho_+|\nabla \omega_+|^2.
\end{equation*}
By the Bochner identity we have 
\begin{equation}\label{eq: eq for h}
	(-\Delta +|u|^2)h=|\nabla^Au|^2
\end{equation}
where $h\coloneqq h(u)$. Hence, we find 
\begin{equation*}
	(-\Delta +|u|^2)(h-\rho_+)=|\nabla^Au|^2+2\langle \psi^+,\omega_+\rangle+\frac12\rho_+|\nabla \omega_+|^2
\end{equation*}
Now, a direct calculation shows that 
\begin{equation*}
	2|\overline\partial_A^{J_+}u|^2=|\nabla^Au|^2+2\langle \psi^+,\omega_+\rangle
\end{equation*}
hence, setting $g_+\coloneqq h-\rho_+$
\begin{equation}\label{eq: eq for g+}
	(-\Delta +|u|^2)g_+=2|\overline\partial_A^{J_+}u|^2+\frac12\rho_+|\nabla \omega_+|^2
\end{equation}
The same equation follows identically for $g_-\coloneqq h-\rho_-$ taking the anti-self-dual part. 

\begin{lemma}\label{lem: g vanishing implies Bradlow}
If $g_+\equiv 0$, then $(u,A)$ is a Bradlow vortex with respect to the K\"ahler structure $(\R^4,g,J_+,\omega_+)$. Similarly for $g_-$. If $(u,A)$ is a pure gauge, it is a Bradlow vortex with respect to any K\"ahler structure.
\end{lemma}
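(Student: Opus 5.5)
The plan is to read off the vanishing of both right-hand-side terms in \eqref{eq: eq for g+}, and to use the maximum principle to show that $\omega_+$ is defined on all of $\R^4$ unless $(u,A)$ is a pure gauge.

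\emph{Step 1: the degenerate set.} Assume $g_+\equiv 0$, so $h=\rho_+\geq 0$ everywhere, i.e.\ $|u|\le 1$. By \eqref{eq: eq for h}, $h$ satisfies $(-\Delta+|u|^2)h=|\nabla^Au|^2\ge 0$, so $h$ is a nonnegative supersolution of a linear equation with bounded nonnegative zeroth-order coefficient. If $h$ vanishes at some point, the strong maximum principle gives $h\equiv 0$, hence $|u|\equiv1$ and $|\nabla^Au|^2\equiv 0$ from the same equation. Then
\begin{equation*}
	0=(\nabla^A)^2u=-iF_Au,
\end{equation*}
and since $|u|=1$ this forces $F_A\equiv 0$, so $(u,A)$ is a pure gauge. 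Otherwise $h>0$ everywhere, so $\rho_+=h>0$ and $\Omega_+=\R^4$. The same argument applies verbatim to $g_-$.

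\emph{Step 2: constancy of $J_+$.} On $\Omega_+=\R^4$, identity \eqref{eq: eq for g+} with $g_+\equiv 0$ gives
\begin{equation*}
	2|\overline\partial_A^{J_+}u|^2+\tfrac12\rho_+|\nabla\omega_+|^2=0.
\end{equation*}
Both terms are nonnegative, so each vanishes. Since $\rho_+>0$, we get $\nabla\omega_+\equiv0$ on the connected set $\R^4$, so $\omega_+$ is a constant self-dual form with $|\omega_+|^2=2$. Consequently $J_+$ is a constant orthogonal complex structure, automatically integrable, and $(\R^4,g,J_+,\omega_+)$ is K\"ahler. The orientation it induces is the given one, because $\omega_+$ is self-dual. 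We also get $\overline\partial_A^{J_+}u=0$, which is the first equation in \eqref{eq: Bradlow eq}.

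\emph{Step 3: the curvature equations.} Decompose $F_A=F_A^++F_A^-$ with $F_A^+=\tfrac12\rho_+\omega_+$. With respect to the orientation induced by $J_+$, the self-dual forms are spanned by $\omega_+$ together with the real and imaginary parts of $(2,0)$-forms, while the anti-self-dual forms are exactly the primitive real $(1,1)$-forms. Hence $F_A$ is a real $(1,1)$-form, so $F^{0,2}_{A;J_+}=0$. Moreover, using $\langle F_A^-,\omega_+\rangle=0$,
\begin{equation*}
	\Lambda F_A=\langle F_A,\omega_+\rangle=\tfrac12\rho_+|\omega_+|^2=\rho_+=h.
\end{equation*}
Thus all three equations in \eqref{eq: Bradlow eq} hold. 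For $g_-$ the argument is identical with the reversed orientation.

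\emph{Step 4: the pure gauge case.} If $(u,A)$ is a pure gauge, then $\nabla^Au=0$, $F_A=0$ and $h=0$, so \eqref{eq: Bradlow eq} holds trivially for any $J$.

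The only delicate point is Step 1: $\omega_+$ is a priori defined only on $\Omega_+$, and one must rule out $\Omega_+$ being a proper (possibly disconnected) subset of $\R^4$. The strong maximum principle applied to \eqref{eq: eq for h} is what handles this, by forcing the dichotomy between the pure gauge case and $\Omega_+=\R^4$.
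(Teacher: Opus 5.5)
Your proposal is correct and follows essentially the same route as the paper: the strong maximum principle applied to \eqref{eq: eq for h} gives the dichotomy between the pure gauge case and $\Omega_+=\R^4$, and then \eqref{eq: eq for g+} forces $\overline\partial_A^{J_+}u=0$ and $\nabla\omega_+=0$. You also make explicit two points the paper leaves implicit, namely that $h\equiv 0$ forces $F_A=0$ via $(\nabla^A)^2u=-iF_Au$, and that $F^{0,2}_{A;J_+}=0$ because the anti-self-dual forms are exactly the primitive real $(1,1)$-forms for $J_+$; both are correct.
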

\begin{proof}
The assumption implies that $h= \rho_+\geq 0$ and maximum principle, together with \eqref{eq: eq for h} implies that either $h= 0$ or $h>0$. In the first case, $|u|= 1$ and $|\nabla^Au|= 0$ by \eqref{eq: eq for h}, so the solution is a pure gauge and thus a Bradlow vortex with respect to any K\"ahler structure. If $\rho_+=h>0$, then $\Omega_+=\R^4$ and we proceed as follows.
	By \eqref{eq: eq for g+} we have 
\begin{equation}\label{eq: Brad1}
	\overline\partial_A^{J_+}u=0
\end{equation}
and $\omega_+$ constant on $\R^4$, which makes $(R^4,g,J_+,\omega_+)$ K\"ahler. 
Then, we have 
\begin{equation}\label{eq: Brad2}
	F_A^+=\frac12\rho_+\omega_+=\frac{h}{2}\omega_+.
\end{equation}
Thus, putting \eqref{eq: Brad1} and \eqref{eq: Brad2} together we obtain that $(u,A)$ is a Bradlow vortex with respect to the structure $(\R^4,g,J_+,\omega_+).$ The proof is analogous if $g_-\equiv 0$ replacing the K\"ahler structure with $(\R^4,g,J_-,\omega_-)$.
\end{proof}

\begin{proof}[Proof of \Cref{thm: classification}] Let $\eta\in C^\infty_c(\R^4)$. Consider the pairs
\begin{equation*}
	{\bf V}_\alpha\coloneqq (\iota_{e_\alpha}\nabla^Au,\iota_{e_\alpha}dA),\quad \alpha=1,2,3,4.
\end{equation*} 
To ease the notation we omit the subscript ${\bf U}$ from ${\bf Q}_{\bf U}$, $Q_{\bf U}$ and $G_{\bf U}$.
A direct calculation shows that
\begin{equation*}
	\Q[\eta{\bf V}_\alpha]=\int L_{\bf U}[{\bf V}_\alpha]\cdot \eta^2{\bf V}_\alpha+\int |d\eta|^2|{\bf V}_\alpha|^2=\int |d\eta|^2|{\bf V}_\alpha|^2
\end{equation*}
where $L_{\bf U}$ is the linearised operator about ${\bf U}$ associated to the gauge-corrected quadratic form. Here we used that $L_{\bf U}[{\bf V}_\alpha]=0$ by direct differentiation. Hence, we get
\begin{equation}\label{eq: first test}
	\sum_{\alpha=1}^4\Q[\eta{\bf V}_\alpha]=\int |d\eta|^2(|\nabla^Au|^2+2|dA|^2)
\end{equation}
Next, let $F=(F_{jk})_{jk}$ be the real-valued skew-symmetric matrix whose entries coincide with the ones of the two form $F_A$. Skew symmetry implies that the matrix 
\begin{equation*}
	B\coloneqq \sqrt {-F^2}=\sqrt{F^\top F}
\end{equation*}
is well defined, symmetric and positive semidefinite. Moreover, 
\begin{equation}\label{eq: B-F estimate}
	\|B\|^2=\|F\|^2=2|F_A|^2,\quad \|\nabla B\|^2\leq \|\nabla F\|^2.
\end{equation}
where $\|\cdot\|$ denotes the Hilbert--Schmidt norm.
Hence, we define the form $B_\alpha\coloneqq B_{\alpha j}dx^j$ and 
\begin{equation*}
	{\bf W}_\alpha=(i\nabla^A_\alpha u,-B_\alpha).
\end{equation*}
Note that ${\bf W}_\alpha$ is in principle only locally Lipschitz, since $B_\alpha$ is. Nevertheless the stability inequality can be extended by density to all compactly supported $H^1$ pairs.
Now we compute 
\begin{equation}\label{eq: test difference}
\begin{split}
		\sum_{\alpha=1}^4\left(\Q[\eta {\bf W}_\alpha]-\Q[\eta {\bf V}_\alpha]\right)=&-\int \eta^2(\|\nabla F\|^2- \|\nabla B\|^2)\\
	&-4\int \eta^2\sum_{\alpha,\beta=1}^4B_{\alpha\beta}\langle \nabla^A_\alpha u,\nabla^A_\beta u\rangle \\
	&+4\int \eta^2\sum_{\alpha,\beta=1}^4F_{\alpha\beta}\langle i\nabla^A_\alpha u,\nabla^A_\beta u\rangle
\end{split}
\end{equation}
The last two terms can be interpreted in the following way: define the complex column vector $D^Au=(\nabla^A_1u,\dots,\nabla^A_4u)^\top$. Then 
\begin{equation*}
	-\sum_{\alpha,\beta=1}^4\left[B_{\alpha\beta}\langle \nabla^A_\alpha u,\nabla^A_\beta u\rangle-F_{\alpha\beta}\langle i\nabla^A_\alpha u,\nabla^A_\beta u\rangle\right]=-(D^Au)^\dagger (B+iF)(D^Au)
\end{equation*}
It is directly checkable that $B=|iF|$, which ensures that $B+iF$ is positive semidefinite. Using this fact, \eqref{eq: first test} and \eqref{eq: test difference} we get to 
\begin{equation}\label{eq: test}
\begin{split}
	\sum_{\alpha=1}^4\Q[\eta {\bf W}_\alpha]+\int\eta^2(\|\nabla F\|^2- &\|\nabla B\|^2)+4\int\eta^2(D^Au)^\dagger (B+iF)(D^Au)\\
	&=\int |d\eta|^2(|\nabla^Au|^2+2|dA|^2)
\end{split}
\end{equation}
By stability, \eqref{eq: B-F estimate} and semipositivity of $B+iF=|iF|+iF$, the left-hand side is nonnegative. 
For any $R>2$, let $\eta_R$ be a logarithmic-cutoff function with the following properties: $\eta_R\equiv 1$ in $B_R$, $\eta_R\equiv 0$ in $\R^4\setminus B_{R_2}$ and 
\begin{equation*}
	|d\eta_R|(x)\leq \frac{C}{|x|\log R}.
\end{equation*}
Using the energy bound \eqref{eq: energy bound}
implies vanishing of the right-hand side in \eqref{eq: test}, hence the vanishing of each term of the left-hand side. In particular, by \eqref{eq: relation Qs} we have 
\begin{equation}\label{eq: split stability consequence}
	Q[\eta_R{\bf W}_\alpha]\to 0,\quad \text{and}\quad \int G[\eta_R{\bf W}_\alpha]^2\to 0.
\end{equation} 
By the Cauchy--Schwarz inequality \eqref{eq: Cauchy-Schwarz} we have, for any compactly supported pair $\b{\Psi}$ and $R$ large enough,
\begin{equation*}
	0\leq |B({\bf W}_\alpha,\b{\Psi})|^2=|B(\eta_R{\bf W}_\alpha,\b{\Psi})|^2\leq Q(\eta_R{\bf W}_\alpha)Q(\b{\Psi})\to 0
\end{equation*}
thus $B({\bf W}_\alpha,\b{\Psi})=0$. By \eqref{eq: split stability consequence} we have that $G[{\bf W}_\alpha]=0$ a.e.; testing \eqref{eq: bilinear pair} with $\Phi={\bf W}_\alpha$ and $\b{\Psi}=(0,\Psi)$ for any compactly supported one-form $\Psi$ gives 
\begin{equation*}
	0=B({\bf W}_\alpha,\b{\Psi})=-\int dB_\alpha\cdot d\Psi+d^*B_\alpha\cdot d^*\Psi+|u|^2B_\alpha\cdot\Psi-2\Psi\cdot\langle i\nabla^Au,i\nabla^A_\alpha u\rangle
\end{equation*}
which is the weak formulation of
\begin{equation*}
	(-\Delta+|u|^2)B_{\alpha\beta}=2\langle \nabla^A_\alpha u,\nabla^A_\beta u\rangle.
\end{equation*}
Taking the trace and using \eqref{eq: eq for h}, this shows that setting $w\coloneqq h(u)-\frac{1}{2}\mathrm{tr}B$, the equation
\begin{equation*}
		(-\Delta+|u|^2)w=0
\end{equation*}
is solved weakly. Testing with $\eta_R^2w$ and integrating by parts we find 
\begin{equation*}
	\int |d(\eta_Rw)|^2+\eta_R^2|u|^2|w|^2=\int |d\eta_R|^2|w|^2.
\end{equation*}
Using that $|w|^2\leq 2h(u)^2+4|F_A|^2\leq 4e(u,A)$, the same log-cutoff argument implies that $dw=0$ and $|u|w=0$. Since the $u\equiv 0$ solution is excluded by energy growth, this implies $w=0$ and hence
\begin{equation}\label{eq: h minus trace B}
	h(u)=\frac{1}{2}\mathrm{tr}B.
\end{equation}
Lastly, choose an oriented orthonormal coframe $\theta^1,\dots,\theta^4$ such that $F_A=\lambda_1\theta^1\wedge \theta^2+\lambda_2\theta^3\wedge \theta^4$. Then 
\begin{equation*}
	\rho_\pm=|\lambda_1\pm \lambda_2|,\quad \frac{1}{2}\mathrm{tr}B=|\lambda_1|+|\lambda_2|=\max\{\rho_-,\rho_+\}
\end{equation*}
Hence, by \eqref{eq: h minus trace B},
\begin{equation}\label{eq: eq for h-maxrho}
	0=h-\frac{1}{2}\mathrm{tr}B=h-\max\{\rho_-,\rho_+\}=\min\{g_-,g_+\}.
\end{equation}
Equation \eqref{eq: eq for h-maxrho} shows that $g_{\pm}\geq h-\max\{\rho_-,\rho_+\}= 0$ and that one of the two vanishes at some point $x_0$. We use again the maximum principle in \eqref{eq: eq for h} to infer that either $h=0$ or $h>0$. In the first case, the solution is a pure gauge and there is nothing to prove, in the second case suppose without loss of generality that $h(x_0)=\rho_+(x_0)>0$. Then, maximum principle and \eqref{eq: eq for g+} imply that $g_+$ vanishes on the connected component of $\Omega_+$ containing $x_0$. Such connected component has to be $\R^4$; otherwise pick a point $x_1$ in its boundary. By continuity, there we would have $h(x_1)=\rho_+(x_1)=0$, a contradiction. So $g_+=0$ on $\R^4$ and applying \Cref{lem: g vanishing implies Bradlow} we conclude the proof. 
\end{proof}

\subsection{Solutions with prescribed nodal set}
Let $\Sigma\subset\R^4$ be a holomorphic curve with respect to some complex structure $J.$ Here we construct a solution to the first order equations \eqref{eq: Bradlow eq} such that $\{u=0\}=\Sigma$, with the prescribed multiplicities.

\begin{proof}[Proof of \Cref{thm: existence}]
	Let $f\colon \C^2\simeq \R^4\to\C$ be the polynomial cutting out $\Sigma$, which exists by \cite[Theorem D]{Stolzenberg1966}. Here we fixed the complex structure $J$ that makes $f$ holomorphic, and we omit it in the notations below. We can suppose $m\coloneqq \deg(f)\geq 1$. As in \cite{Taubes1980}, we encode the singularity model of the sought solution via $f$ and look for a pair of the form 
	\begin{equation*}
		u=e^{-\frac\phi2}f,\quad A=\frac12d^c\phi
	\end{equation*}
	where $d^c\phi(X)\coloneqq -d\phi(JX)$. We have $A^{0,1}=\frac{i}2\overline\partial\phi$ and 
	\begin{equation*}
		\overline\partial_Au=\overline\partial u-iA^{0,1}u=-\frac12e^{-\frac\phi2}f\overline\partial\phi+e^{-\frac\phi2}\overline\partial f-i\frac{i}2e^{-\frac\phi2}f\overline\partial\phi=0,
	\end{equation*}
	where we used holomorphicity $\overline\partial f=0$.
	Moreover, 
	\begin{equation*}
		F_A=dA=\frac12dd^c\phi=i\partial\overline\partial\phi=i\sum_{j,k=1}^2\phi_{z_j\overline z_k}dz_j\wedge d\overline z_k
	\end{equation*}
	so $F_A$ is of type $(1,1)$ and hence $F_A^{0,2}=0.$ Contraction with the K\"ahler form $\omega=\frac{i}2\sum_{j=1}^2dz_j\wedge d\overline z_j$ gives 
	\begin{equation*}
		\Lambda F_A=\frac12\Delta\phi.
	\end{equation*}
	Hence, \eqref{eq: Bradlow eq} is satisfied if we solve the reduced equation 
	\begin{equation}\label{eq: reduced eq for phi}
		\Delta\phi=1-|f|^2e^{-\phi}\quad \text{in }\R^4.
	\end{equation}
	We use the subsolution-supersolution method. In what follows, we use multiple times the fact that for a holomorphic function $g$
	\begin{equation}\label{eq: holo property}
		\Delta |g|^2=4\sum_{j}|\partial_{z_j}g|^2
	\end{equation}
	Let $L(v)\coloneqq -\Delta v+1-|f|^2e^{-v}$ and let
	\begin{equation*}
		S\coloneqq \sum_{|\alpha|\leq m}\frac{(4m)^{|\alpha|}}{\alpha!}|\partial^\alpha f|^2,\quad \Phi=\log S.
	\end{equation*}
	Since some derivative of $f$ of order $m$ are nonzero, $S>0$. With this choice of coefficients, using \eqref{eq: holo property},
	\begin{equation*}
		\Delta S=4\sum_{\alpha,j}c_\alpha|\partial^{\alpha+e_j}f|^2=\sum_{1\leq |\beta|\leq m}\frac{|\beta|}{m}c_\beta|\partial^\beta f|^2\leq S-|f|^2.
	\end{equation*}
	Cauchy--Schwarz, \eqref{eq: holo property} and holomorphicity of $f$ give $|\nabla S|^2\leq S\Delta S$, which is equivalent to $\Delta\log S\geq 0$. Then
	\begin{equation*}
		0\leq \Delta\Phi=\frac{\Delta S}{S}-\frac{|\nabla S|^2}{S^2}\leq \frac{\Delta S}{S}\leq 1-\frac{|f|^2}{S}\leq 1-|f|^2e^{-\Phi}
	\end{equation*}
	Thus $L[\Phi]\geq 0$. On a fixed ball $B_R$, a subsolution is provided by the parabola $P_R(z)=\frac{|z|^2}{8}-C_R$, since 
	\begin{equation*}
		L[P_R]=-|f|^2e^{-P_R}\leq 0
	\end{equation*}
	and choosing the constant $C_R>0$ large enough so that $P\leq\Phi$ in $B_R$. The subsolution-supersolution method gives a smooth solution $P_R\leq \phi_R\leq \Phi$ in $B_R$ with $\phi_R=\Phi$ on $\partial B_R$. Now, the function $v\coloneqq \log|f|^2-\phi_R$ defined on $B_R\setminus\{f=0\}$ solves $\Delta v=e^v-1$, it is non-positive on $\partial B_R$ and tends to $-\infty$ as it approaches $\{f=0\}$. By maximum principle it cannot have a positive interior maximum, so 
	\begin{equation*}
		\log|f|^2\leq \phi_R\leq \Phi,\quad 0\leq \Delta\phi_R\leq 1.
	\end{equation*}
	So $w_R\coloneqq \Phi-\phi_R\geq 0$, $|\Delta w_R|\leq 1$ and at any $p\notin\{f=0\}$ we have $w_R(p)\leq \Phi(p)-\log|f(p)|^2$. Harnack inequality and elliptic estimates give bounds on every compact set, uniformly in $R$, and a diagonal argument provides a global solution $\phi$ satisfying $\log|f|^2\leq \phi\leq \Phi.$
	
	We are only left to prove the quadratic energy growth. Let $c_R\coloneqq 2m\log R$. We claim that 
	\begin{equation}\label{eq: L2 bound}
		\|\phi-c_R\|_{L^2(B_{2R})}\leq CR^2
	\end{equation}
	for some $C>0$. Indeed consider $p_R(z_1,z_2)\coloneqq R^{-m}f(Rz_1,Rz_2)$ and assume without loss of generality that it is a monic polynomial in $z_1$ whose coefficients are bounded uniformly in $z_2\leq 2$. This implies that $\|\log|p_R|\|_{L^2(B_2)}\leq C$. The bound $\log|f|^2\leq \phi\leq \log S$ and scaling yield \eqref{eq: L2 bound}.
	
	Let now $q=\Delta\phi=1-|u|^2=2h$ and consider a cutoff $\chi_R$ supported in $B_{2R}$, identically 1 in $B_R$ and satisfying $|\Delta\chi_R|\leq CR^{-2}$. By \eqref{eq: L2 bound}, 
	\begin{equation*}
		\int_{B_R}q\leq \int_{B_{2R}}\chi_R\Delta\phi=\int_{B_{2R}}(\phi-c_R)\Delta\chi_R\leq CR^2.
	\end{equation*}
	Since $q\leq 1$, we get 
	\begin{equation*}
		\int_{B_R}(1-|u|^2)^2\leq \int_{B_R}q\leq CR^2.
	\end{equation*}
	Using that $|F_A|\leq C|D^2\phi|$ and interior elliptic estimates
	\begin{equation*}
		\int_{B_R}|F_A|^2\leq C\int_{B_R}|D^2\phi|^2\leq C\int_{B_{2R}}q^2+CR^{-4}\int_{B_{2R}}|\phi-c_R|^2 \leq CR^2.
	\end{equation*}
	Lastly, by \eqref{eq: eq for h} we have $2|\nabla^Au|^2=\Delta|u|^2+q|u|^2$ and hence 
	\begin{equation*}
		2\int_{B_R}|\nabla^Au|^2\leq \int_{B_{2R}}(|u|^2-1)\Delta\chi_R+\int_{B_{2R}}\chi_Rq|u|^2\leq CR^2
	\end{equation*}
	concluding the proof.
\end{proof}

\subsection{Classification in $\R^3$}
Here we prove \Cref{cor: class R3}. The proof follows by extending the solution to $\R^4$ trivially and applying \Cref{thm: classification}.

\begin{proof}[Proof of \Cref{cor: class R3}] We pull the pair back to $\R^4=\R^3\times \R$ and, with a little abuse of notation, we still denote it with $(u,A)$. Stability passes to the extension and linear energy bounds become quadratic; thus we can apply \Cref{thm: classification}, which supplies a constant complex structure $J$ for which $(u,A)$ solves \eqref{eq: Bradlow eq}. Pick a frame $\{e_1,e_2,e_3,e_4\}$ such that $e_2=Je_1$ and $e_4=Je_3$, with $e_4$ the added direction. By the Bradlow equation for $u$
\begin{equation*}
	0=\nabla^A_{e_3}u+i\nabla^A_{e_4}u=\nabla^A_{e_3}u
\end{equation*}
while using that $F_A$ is of type $(1,1)$ we get
\begin{equation*}
	\iota_{e_3}F_A=\iota_{e_4}F_A\circ J=0.
\end{equation*}
Thus the pair $(u,A)$ is invariant in the $e_3$ direction, up to gauge. The validity of \eqref{eq: bogo} in the plane spanned by $e_1$ and $e_2$ follows directly.
\end{proof}

\subsection*{Acknowledgments} The author was supported by the European Research Council under Grant Agreements No 948029 (StableIF) and No 101165368 (MAGNETIC).

\subsection*{On the use of AI} The validity of the Cauchy--Schwarz inequality for stable operators \eqref{eq: Cauchy-Schwarz}, as well as its use in the proof of \Cref{thm: classification}, was suggested to the author by ChatGPT 5.6 Sol. Apart from this, ChatGPT was used solely as an auxiliary tool for language editing, improving the presentation of the manuscript, and performing basic checks. This paper does not contain AI generated text.

\bibliography{Bib}
\bibliographystyle{siam}

\end{document}